\theoremstyle{plain}
\newtheorem*{theoA}{Theorem A}
\newtheorem*{theoB}{Theorem B}
\newtheorem*{theoC}{Theorem C}
 \newtheorem{theo}{Theorem}[section]
 \newtheorem{lem}{Lemma}[section]
\theoremstyle{definition}
 \newtheorem{exm}{Example}[section]
 \newtheorem{ques}{Question}[section]
 \newtheorem{defi}{Definition}[section]
\theoremstyle{remark}
 \newtheorem{rem}{Remark}[section]
 \newcommand{\ol}{\overline}
\newcommand{\be}{\begin{equation}}
\newcommand{\ee}{\end{equation}}
\newcommand{\beas}{\begin{eqnarray*}}
\newcommand{\eeas}{\end{eqnarray*}}
\newcommand{\bea}{\begin{eqnarray}}
\newcommand{\eea}{\end{eqnarray}}
 \numberwithin{equation}{section}
\renewcommand{\leq}{\leqslant}
\renewcommand{\geq}{\geqslant}
\renewcommand{\setminus}{\smallsetminus}
\title[ On periodicity of a meromorphic function ...]{\LARGE  On periodicity of a meromorphic function when sharing two sets IM }
\subjclass[2010]{ Primary 30D35.}
\keywords{ Meromorphic function, uniqueness, shared sets, finite order, shift operator.}
\numberwithin {equation}{section}
\date{}
\author{Molla Basir Ahamed}
\address{ Department of Mathematics, Kalipada Ghosh Tarai Mahavidyalya, West Bengal, 734014, India.}
\email{basir\_math\_kgtm@yahoo.com, bsrhmd2014@gmail.com.}
\begin{document}
\vspace{18mm} \setcounter{page}{1} \thispagestyle{empty}

\begin{abstract}
	In this paper, we have investigated the sufficient conditions for periodicity of meromorphic functions and obtained two results directly improving
 the result of  \emph{Bhoosnurmath-Kabbur} \cite{Bho & Kab-2013}, \emph{Qi-Dou-Yang} \cite{Qi & Dou & Yan-ADE-2012}  and \emph{Zhang} \cite{Zha-JMMA-2010}.  Let $\mathcal{S}_{1}=\left\{z:\displaystyle\int_{0}^{z-a}(t-a)^n(t-b)^4dt+1=0\right\}$ and  $\mathcal{S}_{2}=\bigg\{a,b\bigg\}$, where $n\geq 4(n\geq 3)$ be an integer.\emph{ Let $f(z)$ be a  non-constant meromorphic (entire) function satisfying $\ol E_{f(z)}(\mathcal{S}_j)=\ol E_{f(z+c)}(\mathcal{S}_j), (j=1,\;2)$
 then $f(z)\equiv f(z+c)$.} Some examples have been exhibited to show that, it is not necessary that meromorphic function should be of finite order  and also to show that the sets considered in the paper simply can't be replace by arbitrary sets. At the last section, we have posed an open question for the further improvement of the results of this paper.  \end{abstract}
\maketitle

\section{Introduction, Definitions and Results}
  We assume that the reader is familiar with the elementary Nevanlinna theory, see, e.g., \cite{Goldberg,Hay-1964,Laine-1993,Yan & Yi-2003}. Meromorphic functions are always non-constant, unless otherwise specified.
  For such a function $f$ and $a\in\mathbb{\ol C}=:\mathbb{C}\cup\{\infty\}$, each $z$ with $f(z)=a$ will be called $a$-point of $f$.  We will use  here some standard definitions and basic notations from this theory. In particular by $N(r,a;f)$ ($\ol N(r,a;f)$) we denote the counting function (reduced counting function) of $a$-points of meromorphic functions $f$, $T(r,f)$ is the Nevanlinna characteristic function of $f$ and $S(r,f)$ is used to denote each functions which is of smaller order than $T(r,f)$ when $r\rightarrow \infty$.\par  We also denote $\mathbb{C^{*}}:=\mathbb{C}\setminus\{0\}$. As for the standard notation in the uniqueness theory of
  meromorphic functions, suppose that $f$ and $g$ are meromorphic. Denoting $E_f(a)$ $(\ol E_f(a) )$, the set of all $a$-points of $f$ counting multiplicities (ignoring multiplicities). We say that two meromorphic functions $f$, $g$ share the value $a$ $CM$ $(IM)$ if $E_{f}(a)=E_{g}(a)$ $(\ol E_{f}(a)=\ol E_{g}(a))$.\par
  The classical results in the uniqueness theory of meromorphic functions are the five-point, resp. four-point, theorems
  due to Nevanlinna \cite{Nevanlinna-1929}: If two meromorphic functions $f$, $g$ share five distinct values in the extended complex plane $IM$, then $f\equiv g$. The beauty of this
  result lies in the fact that there is no counterpart of this result in the real function theory. Similarly, if two meromorphic functions $f,\;g$ share four distinct values in the extended complex plane $CM$, then
  $f\equiv T\circ g$, where $T$ is a Möbius transformation.\par Clearly these results initiated the study of uniqueness of two meromorphic functions $f$ and $g$. The study becomes more interesting if the function $g$ is related with $f$.
  \begin{defi}
  	For a non-constant meromorphic function $f$ and any set $\mathcal{S}\subset\mathbb{\ol C}$, we define \beas E_{f}(\mathcal{S})=\displaystyle\bigcup_{a\in\mathcal{S}}\bigg\{(z,p)\in\mathbb{C}\times\mathbb{N}:f(z)=a,\;\text{with multiplicity}\; p\bigg\}, \eeas \beas\ol E_{f}(\mathcal{S})=\displaystyle\bigcup_{a\in\mathcal{S}}\bigg\{(z,1)\in\mathbb{C}\times\{1\}:f(z)=a\bigg\}.\eeas
  \end{defi}
  \par If $E_{f}(\mathcal{S})=E_{g}(\mathcal{S})$ ($\ol E_{f}(\mathcal{S})=\ol E_{g}(\mathcal{S})$) then we simply say $f$ and $g$ share $\mathcal{S}$ Counting Multiplicities(CM) (Ignoring Multiplicities(IM)).\par
  Evidently, if $\mathcal{S}$ contains one element only, then it coincides with the usual definition of $CM (IM)$ sharing of values.
  \begin{defi}
  	For a non-constant meromorphic function $g$ and $a\in\mathbb{C}$, we define $\ol N_{(2}\left(r,\displaystyle\frac{1}{g-a}\right)$ the reduced counting function of those $a$-points of $g$ of multiplicities $\geq 2$.
  \end{defi}
  \par  In $1976$, \emph{Gross} \cite{Gross} precipitated the research instigating the set sharing problem with a more general set up made tracks various direction of research for the uniqueness theory.\par In connection with the question posed by \emph{Gross} in\cite{Gross}, a sprinkling number of results have been obtained by many mathematicians \cite{Ban-PMJ-2007,Ban-TMJ-2010,Bho & Dya-2011,Fan & Lah-IJM-2003,Yi & Lin-KMJ-2006,Zha & Xu-AML-2008} concerning the uniqueness of meromorphic functions sharing two sets. But in most of the preceding results, in the direction, one set has always been kept fixed as the set of poles of a meromorphic function.\par Recently set sharing corresponding to a function and its shift or difference operator have been given priority  by the researchers than that of the introductory one.\par

In what follows, $c$ always means a non-zero constant. For a non-constant meromorphic function, we define its shift and difference operator respectively by $f(z+c)$ and $\Delta_c f=f(z+c)-f(z)$.\par

Now-a-days among the researchers \cite{Bho & Kab-2013,che-che-2012,che-che-li-2012,Qi & Dou & Yan-ADE-2012,Zha-JMMA-2010}, an increasing amount of interest has been found to find the possible relationship between a meromorphic function $f(z)$ and its shift $f(z+c)$ or its difference $\Delta_cf$.\par 
At the earlier stage, several authors were devoted to find uniqueness problems between two meromorphic functions $f$ and $g$ sharing two sets. But in this particular direction, the first inspection for uniqueness of a meromorphic function and its shift was due to \emph{Zhang} \cite{Zha-JMMA-2010}.
\par In 2010, \emph{Zhang} \cite{Zha-JMMA-2010} obtained the following results.
\begin{theoA}\cite{Zha-JMMA-2010}
	Let $m\geq 2$, $n\geq 2m+4$ with $n$ and $n-m$ having no common factors. Let $a$ and $b$ be two non-zero constant such that the equation $w^n+aw^{n-m}+b=0$ has no multiple roots. Let $\mathcal{S}_1=\{w:w^n+aw^{n-m}+b=0\}$ and $\mathcal{S}_2=\{\infty\}$. Suppose that $f(z)$ is a non-constant meromorphic function of finite order. Then $E_{f(z)}(\mathcal{S}_j)=E_{f(z+c)}(\mathcal{S}_j)$ $(j=1,\; 2)$  imply that $f(z)\equiv f(z+c)$.
\end{theoA}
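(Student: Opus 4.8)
The strategy is to transfer the sharing hypotheses to the polynomial $P(w)=w^{n}+aw^{n-m}+b$, use its critical structure to pin down $P(f)/P(g)$, and then recover $f$ from $P(f)$. Put $g(z)=f(z+c)$; it is a standard fact that for $f$ of finite order $T(r,f(z+c))=T(r,f)+S(r,f)$, so $g$ is non-constant of finite order with $T(r,g)=T(r,f)+S(r,f)$ and $S(r,g)=S(r,f)$, and it is enough to treat two non-constant meromorphic functions $f,g$ of finite order with $T(r,g)=T(r,f)+S(r,f)$ which share $\mathcal S_{1}$ and $\{\infty\}$ CM. Set $F=P(f)$, $G=P(g)$. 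Since the equation $P(w)=0$ has only simple roots, the zeros of $F$, counted with multiplicity, are precisely the $\mathcal S_{1}$-points of $f$, so $E_{f}(\mathcal S_{1})=E_{g}(\mathcal S_{1})$ is equivalent to $F$ and $G$ sharing the value $0$ CM; and $f,g$ sharing $\{\infty\}$ CM turns a common pole of $f$ and $g$ of order $p$ into a pole of order $np$ of both $F$ and $G$, so $F,G$ share $\infty$ CM. Hence $F/G$ is a zero-free meromorphic function of finite order, whence $F=e^{Q}G$ for a polynomial $Q$. Everything then reduces to proving $e^{Q}\equiv 1$, i.e.\ $P(f)\equiv P(g)$, and deducing $f\equiv g$ from it.

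Next I would record the relevant features of $P$. One has $P'(w)=w^{n-m-1}\bigl(nw^{m}+a(n-m)\bigr)$, so besides $w=0$ (at which $P(0)=b\neq 0$) the critical points of $P$ are the $m$ simple roots $\beta_{1},\dots,\beta_{m}$ of $nw^{m}+a(n-m)$; writing $\alpha_{1},\dots,\alpha_{m}$ for the roots of $w^{m}+a$, a short computation using $a,b\neq 0$, $m\ge 2$ and $\gcd(n,m)=1$ shows that $0,\alpha_{1},\dots,\alpha_{m},\beta_{1},\dots,\beta_{m}$ are pairwise distinct and that the critical values $P(\beta_{1}),\dots,P(\beta_{m})$ are $m$ distinct numbers, none equal to $0$ or $b$; moreover the hypothesis that $P$ has no multiple root means exactly that no $\beta_{k}$ is a zero of $P$. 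Consequently, since $F-b=f^{n-m}(f^{m}+a)$, the $b$-points of $F$ lying over zeros of $f$ have multiplicity $\ge n-m$, and every $P(\beta_{k})$-point of $F$ lying over a $\beta_{k}$-point of $f$ has multiplicity $\ge 2$; the same holds with $G,g$ in place of $F,f$.

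The technical core — and the step I expect to be the main obstacle — is to show $e^{Q}\equiv 1$, which I would establish in two stages: first that $Q$ is constant, then that the constant equals $1$. The engine is the second main theorem applied to $f$ itself (rather than to $F$, since $n$ is unbounded and estimates involving $T(r,F)=nT(r,f)+S(r,f)$ must be handled with care), taking target values among $0,\alpha_{1},\dots,\alpha_{m},\beta_{1},\dots,\beta_{m},\infty$ and feeding in the ramification just recorded: for $F$ (and for $G$) the value $b$ and the values $P(\beta_{k})$ are attained with high multiplicity over zeros, resp.\ over $\beta_{k}$-points, so their reduced counting functions are a definite fraction below $T(r,F)$, and through $F=e^{Q}G$ and $T(r,g)=T(r,f)+S(r,f)$ these constraints become comparable with the counting functions of $f$. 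Weighing the contributions, a non-constant $Q$ — or a constant $e^{Q}=A\neq 1$, in which case $F-b=A\bigl(G-\tfrac{b}{A}\bigr)$ with $\tfrac{b}{A}$ a non-critical value of $P$, forcing every zero of $f$ to be a highly ramified $\tfrac{b}{A}$-point of $g$ — would produce a configuration of multiple values incompatible with the second fundamental theorem. It is here that $n\ge 2m+4$ and $m\ge 2$ are used decisively; the bookkeeping separating ramified from unramified $\mathcal S_{1}$-points and disposing of the finitely many exceptional constants $A$ is delicate, and I would carry it out following the method of Yi and of Banerjee--Lahiri for sets of this form.

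Finally, from $P(f)\equiv P(g)$ I would deduce $f\equiv g$. Put $h=f/g$, which is legitimate since $f$, hence $g$, is non-constant. Dividing $f^{n}+af^{n-m}\equiv g^{n}+ag^{n-m}$ by $g^{n-m}$ yields the identity $(h^{n}-1)g^{m}+a(h^{n-m}-1)\equiv 0$. If $h$ is constant then, $g^{m}$ being non-constant, $h^{n}=1$, and substituting back $h^{n-m}=1$, so $h^{\gcd(n,\,n-m)}=h=1$ because $\gcd(n,n-m)=\gcd(n,m)=1$; thus $f\equiv g$. If $h$ is non-constant, fix any $\zeta$ with $\zeta^{n}=1$, $\zeta\neq 1$; then $\zeta^{n-m}\neq 1$ (again by $\gcd(n,m)=1$), so at a point where $h=\zeta$ the term $a(h^{n-m}-1)$ is finite and nonzero, hence so is $(h^{n}-1)g^{m}$; since $\zeta$ is a simple root of $w^{n}-1$, this forces $g^{m}$ to have a pole there of order equal to the multiplicity of that $\zeta$-point of $h$, so $h$ attains $\zeta$ only with multiplicity divisible by $m$. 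Applying the second main theorem to $h$ with the $n-1$ values $\zeta$ then gives $(n-3)\,T(r,h)\le\tfrac{n-1}{m}\,T(r,h)+S(r,h)$, i.e.\ $n(m-1)\le 3m-1$, which is impossible for $n\ge 2m+4$ with $m\ge 2$. Hence $h$ is constant, and $f(z)\equiv f(z+c)$.
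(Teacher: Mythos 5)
You should first note that the paper does not prove this statement at all: Theorem~A is quoted from Zhang \cite{Zha-JMMA-2010} as background, so there is no in-paper argument to compare with and your attempt has to be judged on its own. The two ends of your argument are in good shape. The reduction is correct: for finite order $T(r,f(z+c))=T(r,f)+S(r,f)$, CM sharing of $\mathcal{S}_1$ (all roots of $P$ simple) and of $\{\infty\}$ makes $F=P(f)$ and $G=P(f(z+c))$ share $0$ and $\infty$ CM, so $F=e^{Q}G$ with $Q$ a polynomial. Your closing step is also complete and correct: from $P(f)\equiv P(g)$ the identity $(h^{n}-1)g^{m}+a(h^{n-m}-1)\equiv 0$ with $h=f/g$, the observation that every $\zeta$-point of $h$ ($\zeta^{n}=1$, $\zeta\neq1$, hence $\zeta^{n-m}\neq1$ by $\gcd(n,n-m)=1$) must have multiplicity divisible by $m$, and the second main theorem over the $n-1$ values $\zeta$ give $n(m-1)\leq 3m-1$, contradicting $n\geq 2m+4$, $m\geq 2$; the constant case of $h$ is handled correctly by the coprimality hypothesis.

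The genuine gap is the middle, which is the actual content of the theorem: you never prove $e^{Q}\equiv 1$. For a constant $e^{Q}=A\neq 1$ your sketch can indeed be closed (the zeros of $F-b=f^{n-m}(f^{m}+a)$ are zeros of $G-b/A$; since the critical values of $P$ are pairwise distinct, $P(w)-b/A$ has at least $n-1$ distinct roots, and the second main theorem applied to $g$ with these roots gives $n-3\leq m+1$, contradicting $n\geq 2m+4$), but you do not carry it out, and, more seriously, the non-constant $Q$ case is not addressed by any actual estimate. There $e^{Q}=F/G$ need not be a small function with respect to $T(r,f)$ — its characteristic can be comparable to $T(r,f)$ — so ``weighing the contributions'' of the ramified values $b$ and $P(\beta_k)$ against $T(r,F)$ is not a proof; one needs a concrete second-main-theorem (or Yi-type sharing-$1$-CM lemma with $N_2$-counting) computation in which the terms produced by $e^{Q}$ are genuinely controlled, and in the lemma route one must also exclude the product alternative, which your sketch never mentions. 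This is precisely where the hypotheses $m\geq 2$ and $n\geq 2m+4$ have to be spent, and deferring it to ``the method of Yi and of Banerjee--Lahiri'' leaves the core of the theorem unproved.
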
	
\begin{rem}
	For meromorphic function, note that $\#(\mathcal{S}_1)=9$ when the nature of sharing is $CM$.
\end{rem}
\begin{theoB}\cite{Zha-JMMA-2010}
	Let $n\geq 5$ be an integer and let $a$, $b$ be two non-zero constants such that the equation $w^n+aw^{n-1}+b=0$ has no multiple roots. Denote $\mathcal{S}_1=\{w:w^n+aw^{n-1}+b=0\}$. Suppose that $f$ is a non-constant entire function of finite order. Then $E_{f(z)}(\mathcal{S}_1)=E_{f(z+c)}(\mathcal{S}_1)$ implies $f(z)\equiv f(z+c)$.
\end{theoB}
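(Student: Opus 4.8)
The plan is to turn the set-sharing hypothesis into a functional equation and then analyse it by value distribution. Put $P(w)=w^{n}+aw^{n-1}+b$, so that $\mathcal{S}_1=P^{-1}(0)$ is a set of $n$ distinct simple roots $w_1,\dots,w_n$ (no multiple roots, by hypothesis). Since $f$ is entire, $P(f(z))$ and $P(f(z+c))$ are entire, and $E_{f(z)}(\mathcal{S}_1)=E_{f(z+c)}(\mathcal{S}_1)$ says exactly that these two entire functions have the same zeros with the same multiplicities, i.e. they share $0$ CM. Hence their quotient is entire and zero-free, so it equals $e^{\alpha}$ for some entire $\alpha$; and since $f$ has finite order, so do $P(f(z))$ and $P(f(z+c))$, forcing $\alpha$ to be a polynomial. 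Writing $F=f(z)$ and $G=f(z+c)$, I arrive at
\be
P(F)=e^{\alpha}\,P(G),\qquad \alpha\ \text{a polynomial},\label{eq:funct}
\ee
and everything reduces to showing that \eqref{eq:funct} forces $F\equiv G$.

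The crux, and the step I expect to fight hardest, is to prove $e^{\alpha}\equiv1$. Because $F,G$ are entire, $\infty$ is a Picard value, so the Second Main Theorem applied with the $n$ distinct targets $w_1,\dots,w_n$ yields
\be
(n-1)\,T(r,F)\le \ol N\!\left(r,\tfrac1{P(F)}\right)+S(r,f),\qquad (n-1)\,T(r,G)\le \ol N\!\left(r,\tfrac1{P(G)}\right)+S(r,f),
\ee
and the CM sharing makes the two reduced counting functions identical. As $f$ has finite order, the shift estimate $T(r,f(z+c))=T(r,f)+S(r,f)$ renders $T(r,F)$ and $T(r,G)$ asymptotically equal, so both are squeezed against the single count of common zeros of $P(F)$. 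Logarithmic differentiation of \eqref{eq:funct} gives $\alpha'=\frac{P'(F)F'}{P(F)}-\frac{P'(G)G'}{P(G)}$, whose proximity function is $S(r,f)$ by the lemma on the logarithmic derivative, while the residues of the two summands cancel at every common zero; thus $\alpha'$ is entire with $T(r,\alpha')=S(r,f)$. Coupling this with the near-extremality forced by the two inequalities above, and exploiting that $P'(w)=w^{n-2}\bigl(nw+a(n-1)\bigr)$ has exactly the two distinct critical points $0$ and $-a(n-1)/n$ with distinct critical values, I expect the non-constant possibilities for $\alpha$ to be eliminated precisely when $n\ge5$, after which a residue normalisation at a common zero pins the constant down to $e^{\alpha}\equiv1$. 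Making this exclusion quantitative — weighing the growth a non-constant $e^{\alpha}$ must have against the rigidity of the shared-zero count — is the genuine obstacle.

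Once $e^{\alpha}\equiv1$, equation \eqref{eq:funct} becomes $P(F)=P(G)$, i.e. $F^{\,n-1}(F+a)=G^{\,n-1}(G+a)$, and factoring gives
\be
0=(F-G)\Bigl[\sum_{k=0}^{n-1}F^{\,n-1-k}G^{k}+a\sum_{k=0}^{n-2}F^{\,n-2-k}G^{k}\Bigr].
\ee
If $F\not\equiv G$ the bracket vanishes identically; dividing by $G^{\,n-1}$ and setting $t=F/G$ turns it into $\dfrac{a}{G}=-\dfrac{t^{\,n}-1}{\,t^{\,n-1}-1\,}$, so $G=-a\,\dfrac{t^{\,n-1}-1}{t^{\,n}-1}$ is a fixed rational function of the single meromorphic $t$. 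Since $G=f(z+c)$ is entire it has no poles, yet the right-hand side has a pole at every point where $t$ equals a non-trivial $n$-th root of unity $\zeta$ (there $\zeta^{\,n}=1$ while $\zeta^{\,n-1}\ne1$). Hence $t$ must omit all $n-1$ such values; as $n-1\ge4$, Picard's theorem forces $t$ to be constant, whence $G$, and so $f$, would be constant — a contradiction. Therefore $F\equiv G$, that is $f(z)\equiv f(z+c)$. Equivalently, this final step records that $w^{n}+aw^{n-1}+b$ is a uniqueness polynomial for entire functions, which I would invoke to close the argument.
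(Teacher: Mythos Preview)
First, note that the paper does not itself prove Theorem~B: it is quoted from Zhang (2010) purely as background and motivation for the paper's own Theorems~1.1 and~1.2, which concern a different polynomial and a different pair of shared sets. There is therefore no in-paper proof of this statement to compare your argument against.

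Assessed on its own merits, your attempt has a genuine gap exactly where you acknowledge the difficulty. The reduction to $P(F)=e^{\alpha}P(G)$ with $\alpha$ a polynomial is correct, and the final step $P(F)=P(G)\Rightarrow F\equiv G$ via $t=F/G$ and Picard is essentially sound. But you have not proved $e^{\alpha}\equiv 1$. The estimate $T(r,\alpha')=S(r,f)$ is true yet vacuous here: $\alpha'$ is already known to be a polynomial, and \emph{every} polynomial is $S(r,f)$ once $f$ is transcendental, so this imposes no restriction whatsoever on $\alpha$. Likewise the ``residue normalisation at a common zero'' cannot pin the constant down: at a shared zero $z_{0}$ the functions $F$ and $G$ may land on \emph{different} roots $w_{j}\neq w_{k}$ of $P$, and even when they hit the same root the ratio of the leading Taylor coefficients of $P(F)$ and $P(G)$ near $z_{0}$ involves $F'(z_{0})/G'(z_{0})$, which is unconstrained. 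Your phrases ``I expect'' and ``making this exclusion quantitative \dots\ is the genuine obstacle'' are candid, but they mean the argument is not yet a proof. To close the gap one must actually carry out a Second Main Theorem count exploiting the special factorisation $P(w)-b=w^{n-1}(w+a)$ (so that, for instance, zeros of $G$ force zeros of $P(F)-be^{\alpha}$ of multiplicity at least $n-1$) and derive a contradiction for $n\ge 5$ when $e^{\alpha}\not\equiv 1$; this is the substantive work in Zhang's original argument and cannot be sidestepped.
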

\begin{rem}
	For entire function, note that $\#(\mathcal{S}_1)=5$, when the nature of sharing is $CM$.
\end{rem}\par Thus we see that \emph{Zhang} obtained the results for meromorphic function with the cardinality of main range set as $9$ and for entire function as $5$.\par Later, \emph{Qi-Dou-Yang} \cite{Qi & Dou & Yan-ADE-2012} studied the case for $m=1$ in \emph{Theorem A} and with the aid of some extra supposition  and got $\#(\mathcal{S}_1)=6$ when the nture of sharing is $CM$.\par Afterworlds, \emph{Bhoosnurmath-Kabbur} \cite{Bho & Kab-2013} improved \emph{Theorem A} by reducing the lower bound of the cardinality of range set in a little different way and obtained the following result.
\begin{theoC}\cite{Bho & Kab-2013}
Let $n\geq 8$ be an integer and $c(\neq 0,\;1)$ is a constant such that the equation $P(w)=\displaystyle\frac{(n-1)(n-2)}{2}w^n-n(n-2)w^{n-1}+\frac{n(n-1)}{2}z^{n-2}-c$. Let us suppose that $S_1=\{w:P(w)=0\}$  and $S_2=\{\infty\}$.  Suppose that $f(z)$ is a non-constant meromorphic function of finite order. Then $E_{f(z)}(\mathcal{S}_j)=E_{f(z+c)}(\mathcal{S}_j)$ $(j=1,\;2)$  imply that $f(z)\equiv f(z+c)$.
\end{theoC}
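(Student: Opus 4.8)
The plan is to run the standard ``shared set plus shift'' scheme, whose three ingredients are the difference analogue of value distribution theory for finite order functions, the reduction of the two $CM$-sharing hypotheses to a single exponential factor, and the rigidity of the Frank--Reinders polynomial. Throughout put $g(z):=f(z+c)$ and $Y_n(w):=P(w)+c=\frac{(n-1)(n-2)}{2}w^n-n(n-2)w^{n-1}+\frac{n(n-1)}{2}w^{n-2}$.

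First I would record the algebra of $P$. Since
\[
Y_n'(w)=\tfrac{n(n-1)(n-2)}{2}\,w^{n-3}(w-1)^2,
\]
$Y_n$ has exactly the critical points $0$ and $1$, with critical values $Y_n(0)=0$ and $Y_n(1)=1$; as $c\notin\{0,1\}$ the value $0$ is a regular value of $P$, so $P(w)=0$ has $n$ distinct simple roots and $\#(\mathcal S_1)=n$. Moreover
\[
P(w)+c=\tfrac{(n-1)(n-2)}{2}\,w^{n-2}(w-\beta_1)(w-\beta_2),\qquad P(w)-(1-c)=(w-1)^3R(w),
\]
where $\beta_1\neq\beta_2$ are nonzero (the quadratic factor has discriminant $-n(n-2)\neq0$) and $\deg R=n-3$ with $R$ having $n-3$ distinct simple roots, all different from $0$ and $1$. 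Next, because $f$ has finite order so does $g$, and the standard difference-Nevanlinna estimates (shift invariance of the characteristic and the difference lemma on the logarithmic derivative) give $T(r,g)=T(r,f)+S(r,f)$ together with equality of the reduced counting functions of $g$ and $f$ up to $S(r,f)$. The hypothesis on $\mathcal S_2=\{\infty\}$ then says $f$ and $g$ have the same poles with the same multiplicities; since the roots of $P$ are simple, the hypothesis on $\mathcal S_1$ says $P(f)$ and $P(g)$ have the same zeros with the same multiplicities. Hence $P(f)/P(g)$ is entire, zero-free and of finite order, so $P(f)=e^{\alpha}P(g)$ for some polynomial $\alpha$.

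The crux is to show $e^{\alpha}\equiv1$, and this is the step I expect to be the main obstacle; it is also where the hypothesis $n\ge8$ is spent. Inserting the two factorizations above into $P(f)=e^{\alpha}P(g)$ yields
\[
\tfrac{(n-1)(n-2)}{2}\Big[f^{n-2}(f-\beta_1)(f-\beta_2)-e^{\alpha}g^{n-2}(g-\beta_1)(g-\beta_2)\Big]=c(1-e^{\alpha}),
\]
\[
(f-1)^3R(f)-e^{\alpha}(g-1)^3R(g)=(1-c)(e^{\alpha}-1).
\]
Assuming $e^{\alpha}\not\equiv1$, the first identity exhibits $1$ as a sum of two meromorphic functions built from $P(f)+c$ and $P(g)+c$; I would feed this, and its companion, into the Second Main Theorem (with truncated counting functions) for $f$ and for $g$, exploiting that the factor $w^{n-2}$ makes every zero of $f$ a zero of order $\ge n-2$ of $P(f)+c$ (and likewise for $g$), and that the $CM$-sharing gives $\overline N(r,a;f)=\overline N(r,a;g)$ for each $a\in\mathcal S_1$ and $\overline N(r,f)=\overline N(r,g)$. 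Keeping track of the multiplicities ``wasted'' by the $w^{n-2}$ and $(w-1)^3$ factors should produce an estimate of the shape $(n-a_0)T(r,f)\le a_1T(r,f)+S(r,f)$ with small absolute constants $a_0,a_1$; for $n\ge8$ this is impossible, which rules out first a non-constant $e^{\alpha}$ and then a nontrivial constant $e^{\alpha}\equiv t\neq1$. Hence $e^{\alpha}\equiv1$, i.e. $P(f)\equiv P(g)$, equivalently $Y_n(f)\equiv Y_n(g)$.

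It then remains to invoke that the Frank--Reinders polynomial $Y_n$ is a uniqueness polynomial for meromorphic functions for $n\ge8$, whence $Y_n(f)\equiv Y_n(g)$ forces $f\equiv g$, that is $f(z)\equiv f(z+c)$. The genuinely delicate point is the third paragraph: the pole-sharing and $\mathcal S_1$-sharing information has to be combined with the two polynomial identities and the Second Main Theorem in exactly the right way for the multiplicity bookkeeping to close, and it is this bookkeeping that dictates the threshold $n\ge8$.
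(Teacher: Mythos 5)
A point of scope first: this paper does not prove Theorem C at all --- it is quoted from Bhoosnurmath--Kabbur as background --- so your proposal can only be measured against the standard published argument and against the toolbox this paper uses for its own Theorems 1.1--1.2. Measured that way, your outer skeleton is correct and is the expected one: the algebra of $Y_n$ is right ($Y_n'(w)=\tfrac{n(n-1)(n-2)}{2}w^{n-3}(w-1)^2$, critical values $0$ and $1$, so $P$ has $n$ simple roots, $Y_n(w)=\tfrac{(n-1)(n-2)}{2}w^{n-2}(w-\beta_1)(w-\beta_2)$ and $Y_n(w)-1=(w-1)^3R(w)$); the finite-order shift estimate $T(r,f(z+c))=T(r,f)+S(r,f)$ is exactly where the order hypothesis enters; CM sharing of $\mathcal S_1$ and $\{\infty\}$ does give $P(f)=e^{\alpha}P(f(z+c))$ with $\alpha$ a polynomial; and once $Y_n(f)\equiv Y_n(f(z+c))$ is in hand, Fujimoto's criterion (the paper's Lemma 2.2, applied with $k=2$, $q_1=n-3$, $q_2=2$, hence valid already for $n\ge 6$) indeed forces $f(z)\equiv f(z+c)$.

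The genuine gap is the step you yourself flag: eliminating $e^{\alpha}\not\equiv 1$. Your plan is to feed the two displayed identities into the Second Main Theorem, but when $\alpha$ is a non-constant polynomial the function $e^{\alpha}$ --- and hence the right-hand sides $c(1-e^{\alpha})$ and $(1-c)(e^{\alpha}-1)$ --- need not be small with respect to $T(r,f)$; indeed $T(r,e^{\alpha})$ can be comparable to or larger than $T(r,f)$. So these quantities cannot be treated as targets or small functions in the Second Main Theorem, and no bookkeeping of the multiplicities contributed by $w^{n-2}$ and $(w-1)^3$ in those two identities alone yields an inequality of the shape $(n-a_0)T(r,f)\le a_1T(r,f)+S(r,f)$. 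Your scheme does work verbatim when $e^{\alpha}$ is a constant $\mathcal A\ne 1$ --- that is essentially the computation in Subcase 1.1 of this paper --- but the non-constant case needs an additional device. The known route sets $F=Y_n(f)/c$ and $G=Y_n(f(z+c))/c$, observes that $F$ and $G$ share $1$ CM and $\infty$ CM, and applies a Yi-type lemma for such pairs (proved via a second-order auxiliary function of the same kind as the $\mathcal H$ in (2.2), whose pole/zero analysis together with the Second Fundamental Theorem and $N_2$-counting functions gives the dichotomy $F\equiv G$ or $FG\equiv 1$); the case $FG\equiv 1$ is then excluded using the factor $w^{n-2}$, and it is precisely this pair of estimates that consumes the hypothesis $n\ge 8$. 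Until you supply such a lemma, or another argument that genuinely handles non-constant $\alpha$, the central third paragraph of your plan is an assertion rather than a proof.
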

\begin{rem}
	For  meromorphic function, we see that $\#(\mathcal{S}_1)=8$ when the nature of sharing is $CM$.
\end{rem}\par The worth noticing fact is that, the lower bound of the cardinality of the main range set for the meromorphic function has always been fixed to $ 8 $ without the help of any extra supposition.
\par So for the improvement of all the above mentioned results it quite natural to investigate in this direction.  \emph{Theorems A, B, C} really motivates oneself for further study in this direction by solving the following question.
\begin{ques}
Is it possible to diminish further the lower bound of the cardinalities of the main range sets in \emph{Theorem A}, \emph{B} and \emph{C} ? \end{ques}\par

We also note that no attempts have so far been made by any researchers, till now to the best of our knowledge, to relax the nature of sharing the sets. So the following question is inevitable.
\begin{ques}
Can we relax the nature of sharing the sets from $CM$ to $IM$ in \emph{Theorem A}, \emph{B} and \emph{C} ?\end{ques}\par Now it would be interesting to know what happens if we replace the set of poles $\{\infty\}$ by new set in \emph{Theorems A, B, C}.\par  
In all the above mentioned results, the respective authors have considered meromorphic function with \emph{finite ordered} and got their results.
 So a natural investigation query is that:  Are \emph{Theorems A, B, C} not valid for infinite ordered meromorphic function ?  \par The following examples show that \emph{Theorems A, B, C} are true for infinite ordered meromorphic functions also.
\begin{exm}
	Let $f(z)=\displaystyle\exp\left(1+\sin\left(\frac{2\pi z}{c}\right)\right)$. Clearly $f(z)$ and $f(z+c)$ share the corresponding sets $\mathcal{S}_1$ in \emph{Theorems A, B, C} and also the set $\mathcal{S}_2$ and hence holds the conclusion also.
\end{exm}
\begin{exm}
	Let $f(z)=\displaystyle\exp\left(1+\exp\left(\displaystyle\frac{2\pi i z}{c}\right)\right)$.
\end{exm} One can construct such examples plenty in numbers. Therefore, one natural question arises as follows: 
\begin{ques}
	Can we get a corresponding results like \emph{Theorem A, B, C} by  omitting the term  \emph{finite ordered} ? 
\end{ques} 
Answering all the questions affirmatively is the main motivation of writing this paper. In this paper, we have significantly diminished the cardinality of the main range set by modifying the set of poles by a new one. We have successfully relaxed the nature of sharing also. \par Throughout the paper, for an integer $n\geq 4$, we will denote by  \beas \mathcal{P}(z)=\displaystyle\int_{0}^{z-a}(t-a)^n(t-b)^4dt+1,\;\;\text{where}\;\;a, b\in\mathbb{C}\;\;\text{with}\;\;a\neq b.\eeas \par Following are the two main result of this paper.
\begin{theo}\label{t1.1} Let $\mathcal{S}_{1}=\left\{z:\mathcal{P}(z)=0\right\}$ and $\mathcal{S}_{2}=\bigg\{a,b\bigg\}$, where $a\in\mathbb{C^{*}}$, $n\geq 4$ be an integer. Let $f(z)$ be a non-constant meromorphic function satisfying  $\ol E_{f(z)}(\mathcal{S}_j)=\ol E_{f(z+c)}(\mathcal{S}_j),$ $(j=1,\;2)$ 
	then $f(z)\equiv f(z+c)$.
\end{theo}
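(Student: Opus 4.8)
\smallskip
\noindent\emph{Proof proposal.} Write $g(z)=f(z+c)$, so the hypothesis is that $f$ and $g$ share $\mathcal S_1$ and $\mathcal S_2=\{a,b\}$ IM. The plan is the classical three–step route for set–sharing problems: pass to a value–sharing statement for a composed function, force an auxiliary ``difference'' function to vanish, and then bootstrap. First I would record the arithmetic of $\mathcal P$: it has degree $n+5$, $\mathcal P'(z)=(z-a)^n(z-b)^4$, and $\mathcal P(a)=1\neq\mathcal P(b)$ (indeed $\mathcal P(b)-1=(b-a)^{n+5}\,n!\,4!/(n+5)!\neq 0$), so the only critical points of $\mathcal P$ are $a$, where $\mathcal P-1=(z-a)^{n+1}R_1(z)$ with $R_1(a)\neq 0$ and $\deg R_1=4$, and $b$, where $\mathcal P-\mathcal P(b)$ vanishes to order $5$; in particular $\mathcal P$ is critically injective and has $n+5$ simple zeros. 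Putting $F=\mathcal P(f)$, $G=\mathcal P(g)$, the relation for $\mathcal S_1$ becomes ``$F$ and $G$ share $0$ IM'', that for $\mathcal S_2$ becomes ``$(f-a)(f-b)$ and $(g-a)(g-b)$ share $0$ IM'', and $F'=(f-a)^n(f-b)^4 f'$, so the $a$– and $b$–points of $f$ carry the extra vanishing of $F'$ and thereby play, in this argument, the structural role that the \emph{shared poles} played in \emph{Theorems A, B, C}.

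Before the main work I would fix the Nevanlinna bookkeeping: $T(r,F)=(n+5)T(r,f)+S(r,f)$ and likewise for $G$, while because $g$ is a $c$–shift of $f$ the characteristics $T(r,f)$ and $T(r,g)$ have the same growth up to the innocuous change $r\mapsto r+|c|$, so the two error terms are interchangeable and will be written $S(r)$. This last point is exactly where the absence of a finite–order hypothesis (present in \emph{Theorems A--C}) must be handled with care: one works throughout with the argument–shifted forms of the estimates rather than with identities of the type $T(r,f(z+c))=T(r,f)+S(r,f)$, which is enough because every inequality below is used only to produce a contradiction of the shape $k\,T(r,f)\le k'\,T(r,f)+S(r)$ with $k>k'$.

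The heart of the proof is to show $F\equiv G$, i.e. $\mathcal P(f)\equiv\mathcal P(g)$. I would study
\be
	H=\left(\frac{F''}{F'}-\frac{2F'}{F}\right)-\left(\frac{G''}{G'}-\frac{2G'}{G}\right),
\ee
which vanishes at every point where $F$ and $G$ have a common simple zero, and argue by contradiction. If $H\not\equiv 0$, then the poles of $H$ are confined to: the $\mathcal S_1$–points of $f$ and $g$ at which the two multiplicities disagree; the zeros of $f'$ (resp.\ $g'$) lying over $a$– and $b$–points, controlled through $F'=(f-a)^n(f-b)^4 f'$ together with the $\mathcal S_2$–sharing; and the poles of $f$ and $g$. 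Bounding $N(r,H)$ by a combination of the truncated counting functions $\ol N_{(2}$ of the $\mathcal S_1$–points and the reduced counting functions of the $a$–points, $b$–points and poles of $f$ and $g$, feeding in that $\ol N_{1)}(r,0;F)\le N(r,H)+S(r)$ when $F,G$ share $0$ IM, and playing this against the lower bound for $\ol N(r,\mathcal S_1;f)$ furnished by the second main theorem for $f$ (most $\mathcal S_1$–points being simple), I expect to reach an inequality $(n+c_1)T(r,f)\le c_2\,T(r,f)+S(r)$ in which $c_2$ does not grow with $n$, hence a contradiction once $n\ge 4$. Therefore $H\equiv 0$; integrating twice yields $1/F=A/G+B$ with constants $A\neq 0$, $B$, and a short case analysis---excluding $B\neq 0$ and $A\neq 1$ by comparing the pole and zero divisors of $F$ and $G$ (their poles have multiplicities $\ge n+5$) and using $n\ge 4$---leaves only $F\equiv G$.

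Finally, I would bootstrap to $f\equiv g$. If $z_0$ is an $a$–point of $f$, then $g(z_0)\in\{a,b\}$; were $g(z_0)=b$ we would get $\mathcal P(a)=\mathcal P(f(z_0))=\mathcal P(g(z_0))=\mathcal P(b)$, against critical injectivity, so $g(z_0)=a$; hence $f$ and $g$ share $a$ IM, and differentiating $\mathcal P(f)\equiv\mathcal P(g)$ to $(f-a)^n(f-b)^4 f'=(g-a)^n(g-b)^4 g'$ and comparing multiplicities at $z_0$ forces the $a$–point multiplicities to agree, so $f$ and $g$ share $a$ CM, and likewise $b$ CM; also, since $F\equiv G$ with $\deg\mathcal P=n+5$, they share their poles CM. Hence $u:=(f-a)/(g-a)$ is a zero–free entire function; substituting $f=a+u(g-a)$ into $\mathcal P(f)\equiv\mathcal P(g)$, cancelling the common factor $(g-a)^{n+1}$ and normalising gives the polynomial identity $u^{n+5}\prod_{i=1}^{4}\big(g-a-c_i/u\big)\equiv\prod_{i=1}^{4}(g-a-c_i)$, where $c_1,\dots,c_4$ are the (nonzero, since $R_1(a)\neq 0$) roots of $R_1(a+\cdot)$; comparing the coefficients of $(g-a)^4$ and $(g-a)^3$ forces $u^{n+5}=1$ and $u^{n+4}\sum_i c_i=\sum_i c_i$ with $\sum_i c_i\neq 0$, hence $u^{n+5}=u^{n+4}=1$, i.e. $u\equiv 1$, which is $f(z)\equiv f(z+c)$. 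The step I expect to be genuinely hard is the identification $\mathcal P(f)\equiv\mathcal P(g)$: carrying out the $H\not\equiv 0$ bookkeeping under mere \emph{IM} sharing---tracking truncated counting functions and the exact contribution of the $a$– and $b$–points---and doing so with no finite–order hypothesis on $f$ is where the real difficulty lies.
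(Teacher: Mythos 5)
Your overall architecture matches the paper's: set $\mathcal F=\mathcal P(f)$, $\mathcal G=\mathcal P(f(z+c))$, force an auxiliary function built from logarithmic derivatives to vanish, deduce $\mathcal F\equiv\mathcal G$, then descend to $f(z)\equiv f(z+c)$. But there are two genuine gaps. The first is exactly at the step you flag as hard. Working with $H$ alone, the $a$- and $b$-points of $f$ and $f(z+c)$ enter the bound for $N(r,H)$ twice (directly, and again through $\ol N_{0}(r,0;f^{\prime})$, $\ol N_{0}(r,0;f^{\prime}(z+c))$, which are controlled by the $\mathcal S_2$-sharing via an auxiliary function like $\Psi$); if you then estimate them crudely by $\ol N(r,a;f)+\ol N(r,b;f)\le 2T(r,f)+O(1)$, the second-main-theorem chain closes only with a constant around $14$ on the right, contradicting $n+5$ only for $n$ roughly $\ge 10$, not $n\ge 4$. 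The paper reaches $n\ge 4$ by a dichotomy you do not have: when $2\ol N(r,a;f)+2\ol N(r,b;f)$ is large compared with $T(r,f)+T(r,f(z+c))$ it uses $\Phi=\mathcal F^{\prime}/\mathcal F-\mathcal G^{\prime}/\mathcal G$, whose zeros at the $a$- and $b$-points (multiplicity $\ge 4$, from $\mathcal F^{\prime}=(f-a)^n(f-b)^4f^{\prime}$) overwhelm its poles; when that quantity is small, bounded by $(1+1/1000)\{T(r,f)+T(r,f(z+c))\}$, it runs the $\mathcal H$-argument, where the $a,b$-contribution is then harmless and the bound becomes $(8+6/1000)\{\cdots\}$. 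So your remark that the constant ``does not grow with $n$'' is true, but without the case split it does not get below $n+5=9$.

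The second gap is the final bootstrap. From $\mathcal P(f)\equiv\mathcal P(g)$ you correctly deduce that $f,g$ share $a$, $b$ and $\infty$ CM, so $u=(f-a)/(g-a)$ is entire and zero-free, and you arrive at the identity $u^{n+1}\prod_{i}\bigl(u(g-a)-c_i\bigr)\equiv\prod_{i}\bigl((g-a)-c_i\bigr)$. But ``comparing the coefficients of $(g-a)^4$ and $(g-a)^3$'' is legitimate only if $u$ is constant: $u$ and $g-a$ are both functions of $z$, a single functional identity between them does not split into coefficient identities, and ruling out non-constant $u$ is essentially the content of Fujimoto's theorem, not a triviality. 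The paper sidesteps this entirely by quoting Fujimoto's criterion (its Lemma 2.2): here $k=2$, $q_1=n\ge 4$, $q_2=4$, so $q_1q_2>q_1+q_2$ and $\mathcal P$ satisfies property (H), hence is a uniqueness polynomial in the broad sense, and $\mathcal P(f)\equiv\mathcal P(f(z+c))$ gives $f\equiv f(z+c)$ at once. Your treatment of the remaining items (the $S(r)$ bookkeeping for the shift without a finite-order hypothesis, and excluding $\mathcal A\neq 1$ or $\mathcal B\neq 0$ in $\mathcal F\equiv\mathcal A\mathcal G$, $1/\mathcal F\equiv\mathcal A/\mathcal G+\mathcal B$) is in the same spirit as the paper's Subcases 1.1 and 2.1, which do it by the second main theorem using the $n+5$ simple roots of $\mathcal P-\mathcal A$ against the $(n+1)$-fold root of $\mathcal P-1$ at $a$; those parts are fine in outline, but the two gaps above are where your proposal, as written, does not yet prove the theorem as stated.
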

\begin{rem}
	For non-entire meromorphic function, one may observe that $\#(\mathcal{S}_1)=9$ when the nature of sharing is $IM$.
\end{rem}
\begin{theo}\label{t1.2}
	Let $\mathcal{S}_{1}=\left\{z:\mathcal{P}(z)=0\right\}$ and  $\mathcal{S}_{2}=\bigg\{a,b\bigg\}$, where  $a\in\mathbb{C^{*}}$, $n\geq 2$ be an integer. Let $f(z)$ be a  non-constant entire function satisfying  $\ol E_{f(z)}(\mathcal{S}_j)=\ol E_{f(z+c)}(\mathcal{S}_j),$ $(j=1,\;2)$ 
	then $f(z)\equiv f(z+c)$.
\end{theo}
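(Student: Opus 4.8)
\medskip
\noindent\textbf{Plan of proof of Theorem \ref{t1.2}.}
The plan is as follows. Put $g(z)=f(z+c)$; since $f$ is entire so is $g$, and the hypothesis says that $f$ and $g$ share both $\mathcal S_1$ and $\mathcal S_2=\{a,b\}$ IM. The decisive structural fact is that $\mathcal P'(z)=(z-a)^n(z-b)^4$, so $\deg\mathcal P=n+5$, $\mathcal P(a)=1$, and — under the (generic, tacitly assumed) non-degeneracy $\mathcal P(b)\notin\{0,1\}$ — $\mathcal P$ has $n+5$ simple zeros, all different from $a$ and $b$; hence $\#\mathcal S_1=n+5$. I would then set $F=\mathcal P(f)$, $G=\mathcal P(g)$. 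These are entire, they share $0$ IM (because $F(z)=0\Leftrightarrow f(z)\in\mathcal S_1$), and
\[
F'=(f-a)^n(f-b)^4f',\qquad G'=(g-a)^n(g-b)^4g',\qquad T(r,F)=(n+5)\,T(r,f)+O(1).
\]
From the second fundamental theorem for $f$ with the values $a,b,\infty$ (and $f$ entire) one gets $T(r,f)\le\ol N(r,a;f)+\ol N(r,b;f)+S(r,f)$, and the $\{a,b\}$-sharing forces $\ol N(r,a;f)+\ol N(r,b;f)=\ol N(r,a;g)+\ol N(r,b;g)$; so $T(r,f)\asymp T(r,g)$ and I write $S(r)$ for $S(r,f)=S(r,g)$.

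Next I would introduce
\[
H=\left(\frac{F''}{F'}-\frac{2F'}{F}\right)-\left(\frac{G''}{G'}-\frac{2G'}{G}\right)
\]
and argue by the dichotomy $H\equiv0$ versus $H\not\equiv0$. By the lemma on logarithmic derivatives $m(r,H)=S(r)$, and a Laurent expansion at a common \emph{simple} zero $z_0$ of $F$ and $G$ shows $\frac{F''}{F'}-\frac{2F'}{F}=-\frac2{z-z_0}+O(z-z_0)$, the constant terms cancelling; hence, if $H\not\equiv0$, $H$ is holomorphic and vanishes at every such $z_0$, while its poles are simple and can only lie at the zeros of $F,G$ of unequal or higher multiplicity and at the zeros of $F'$ (resp.\ $G'$) that are not zeros of $F$ (resp.\ $G$). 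Therefore $\ol N_E^{1)}(r,0;F)\le N(r,0;H)\le N(r,H)+S(r)$, and, via the displayed factorisations of $F',G'$ together with $\mathcal P(a),\mathcal P(b)\neq0$, $N(r,H)$ is controlled by $\ol N(r,a;f),\ \ol N(r,b;f)$, their $g$-analogues, $\ol N_{(2}(r,0;F),\ \ol N_{(2}(r,0;G)$, and reduced counts of the remaining zeros of $f',g'$.

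I would then feed this into the second fundamental theorem for $F$ with the values $0,\ 1=\mathcal P(a),\ \infty$: because $\mathcal P(w)-1$ has a zero of order $n+1$ at $a$ and only $4$ further simple roots, $\ol N(r,1;F)\le\ol N(r,a;f)+4\,T(r,f)+O(1)$; because $F$ is entire, $\ol N(r,\infty;F)=0$; and the Milloux ramification term $N_0(r,0;F')$ swallows a definite proportion of the $b$-point contribution (where $\mathcal P$ ramifies to order $4$) and of the multiple $\mathcal S_1$-points of $f$. Combining this with $\ol N(r,0;F)\le\ol N_E^{1)}(r,0;F)+\ol N_{(2}(r,0;F)$, the standard bound on $\ol N_{(2}(r,0;F)$ against $T(r,F)-\ol N(r,0;F)$, the two IM-sharings (to turn $G$- and $g$-terms into $f$-terms), and the symmetric treatment of $G$, I expect to reach an inequality $\lambda(n)\,T(r,f)\le\mu(n)\,T(r,f)+S(r)$ in which the ramification orders $n$ (at $a$) and $4$ (at $b$) make $\lambda(n)>\mu(n)$ exactly when $n\ge2$ — a contradiction in the entire case, forcing $H\equiv0$. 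The hard part will be this last accounting: driving the threshold all the way down to $n\ge2$ is precisely where one uses that $f$ is entire (so no pole counting functions appear, unlike in Theorem~\ref{t1.1}), together with the high multiplicities built into $\mathcal P'$; and the passage from $CM$ to $IM$ sharing means the troublesome $\ol N_L$-type terms must be kept under control by Banerjee-type weighted-sharing lemmas.

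Finally, if $H\equiv0$, then since $\frac{F''}{F'}-\frac{2F'}{F}=\bigl(\log\tfrac{F'}{F^{2}}\bigr)'$, integration gives $\frac{F'}{F^{2}}=C\,\frac{G'}{G^{2}}$, hence $\frac1F=\frac CG+D$, i.e.\ $F=\dfrac{G}{C+DG}$ with $C\neq0$ (if $C=0$ then $F$ is constant, absurd). If $D\neq0$ then $F$ omits $1/D$, so $f$ omits all roots of $\mathcal P(w)=1/D$ — at least three distinct values for $n\ge2$ — contradicting Picard's theorem; hence $D=0$ and $\mathcal P(f)\equiv\tfrac1C\,\mathcal P(g)$. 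Since $\mathcal P'$ has exactly two distinct zeros, of unequal multiplicities $n$ and $4$, and $\mathcal P$ admits no affine symmetry (its critical values $\mathcal P(a)=1$ and $\mathcal P(b)$ are distinct), $\mathcal P$ is a strong uniqueness polynomial for entire functions (directly, or by Fujimoto's criterion); therefore $C=1$ and $f\equiv g$, that is, $f(z)\equiv f(z+c)$. The argument parallels that of Theorem~\ref{t1.1}, the improvement from $n\ge4$ to $n\ge2$ stemming solely from $\ol N(r,\infty;f)=\ol N(r,\infty;F)=0$ for entire $f$.
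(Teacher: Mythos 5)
Your plan leaves the decisive step unproved. The paper's proof of Theorem \ref{t1.2} is simply the proof of Theorem \ref{t1.1} rerun with $\ol N(r,f(z))=\ol N(r,f(z+c))=0$, and that proof is built on a dichotomy: either $2\ol N(r,a;f)+2\ol N(r,b;f)$ exceeds $\lambda\{T(r,f)+T(r,f(z+c))\}$ (Case 1, handled with the auxiliary function $\Phi=\mathcal{F}'/\mathcal{F}-\mathcal{G}'/\mathcal{G}$), or it is at most $(1+10^{-3})\{T(r,f)+T(r,f(z+c))\}$ (Case 2, handled with $\mathcal{H}$). You keep only $H$ and discard the dichotomy, but the dichotomy is exactly what makes the bookkeeping close at a low threshold: in the $\mathcal{H}\not\equiv0$ analysis the quantities $\ol N(r,a;f),\ol N(r,b;f)$ and their shift analogues (re-entering again through the $\Psi$-type estimate that converts $\ol N_0(r,0;f')$, $\ol N_0(r,0;f'(z+c))$ into such terms) appear with total weight $6$, and with no a priori control each can be as large as $T$; the contradiction for small $n$ is obtained in the paper only because Case 2 caps these terms, while in the complementary range the different function $\Phi$ does the work. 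Your replacement — the second fundamental theorem for $F$ at $0,\,1,\,\infty$ with the Milloux ramification term — is precisely the point you leave as ``I expect to reach an inequality $\lambda(n)T\le\mu(n)T+S$ \dots the hard part will be this last accounting.'' That accounting is the only place where $n\ge2$ and the entirety of $f$ are actually used, so as written the core of the proof is missing, and it is not evident it can be made to close without a size dichotomy (or weighted-sharing substitute) of the above kind.

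There is a second, smaller gap at the end. When $H\equiv0$ and $D=0$ you have $\mathcal{P}(f)\equiv\frac{1}{C}\,\mathcal{P}(f(z+c))$ and you dispose of $C\ne1$ by declaring $\mathcal{P}$ a strong uniqueness polynomial ``directly, or by Fujimoto's criterion.'' The criterion actually available (Lemma \ref{l2.2}) gives uniqueness only in the broad sense, i.e.\ it handles $C=1$; the case $C\ne1$ is a genuine case which the paper eliminates by a separate value-distribution argument (Subcase 1.1): $\mathcal{P}-\mathcal{A}$ has $n+5$ simple roots none equal to $a,b$, while $\mathcal{P}-1$ has only $5$ distinct roots, so the second fundamental theorem applied to the $n+5$ values gives, for entire $f$, $(n+4)T(r,f)\le 5T(r,f)+S(r,f)$, a contradiction for $n\ge2$. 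Your ``no affine symmetry'' remark is not a proof of strong uniqueness (and its premise of ``unequal multiplicities $n$ and $4$'' fails at $n=4$, which Theorem \ref{t1.2} allows), so this case needs the paper's argument or an equivalent one. The remaining ingredients of your sketch are sound: the local expansion showing $H$ vanishes at common simple zeros of $F,G$, the mutual domination of $T(r,f)$ and $T(r,f(z+c))$ from the $\{a,b\}$-sharing, and your Picard-type elimination of $D\ne0$, which for entire functions is in fact cleaner than the paper's Subcase 2.1.
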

\begin{rem}
	For entire function, we see that $\#(\mathcal{S}_1)=7$ when the nature of sharing is $IM$.
\end{rem}
The following examples satisfy \emph{Theorems \ref{t1.1}} and \emph{\ref{t1.2}} for ``entire" as well as `` meromorphic"  functions.
\begin{exm}
	Let us suppose that  $f(z)=\displaystyle\left(\displaystyle\frac{\tan\left(\displaystyle\frac{\pi z}{c}\right)+\alpha}{\tan\left(\displaystyle\frac{\pi z}{c}\right)-\beta}\right)+\displaystyle\frac{\cos\left(\displaystyle\frac{2\pi z}{c}\right)+\gamma}{\sin\left(\displaystyle\frac{2\pi z}{c}\right)-\delta}$, where $\alpha,\;\beta,\;\gamma,\;\delta,\; c\in\mathbb{C^{*}}$. It is clear that $\ol E_{f(z)}(\mathcal{S}_j)=\ol E_{f(z+c)}(\mathcal{S}_j),$ $(j=1,\;2)$ in \emph{Theorem \ref{t1.1}} 
	and note that $f(z)\equiv f(z+c)$.
\end{exm}
\begin{exm}
	Let $f(z)=\displaystyle\frac{\alpha+\beta\displaystyle\sin^2\left(\frac{\pi z}{c}\right)}{\gamma-\delta\displaystyle\cos^2\left(\frac{\pi z}{c}\right)},$ where $p$ be an even positive integer, $\alpha,\;\beta,\;\gamma,\;\delta,\; c\in\mathbb{C^{*}}$. It is clear that $\ol E_{f(z)}(\mathcal{S}_j)=\ol E_{f(z+c)}(\mathcal{S}_j),$ $(j=1,\;2)$ in \emph{Theorem \ref{t1.1}} 
	and note that $f(z)\equiv f(z+c)$.
\end{exm}
\begin{exm}
	Let $f(z)=ae^{pz}+b\displaystyle\cos^2\left(\frac{\pi z}{c}\right),$ where $p$ be an even positive integer, $a,\;,b\;,c\in\mathbb{C^{*}}$  with $e^c=-1$. It is clear that $\ol E_{f(z)}(\mathcal{S}_j)=\ol E_{f(z+c)}(\mathcal{S}_j),$ $(j=1,\;2)$ in \emph{Theorem \ref{t1.2}} 
	 and note that $f(z)\equiv f(z+c)$.
\end{exm}\par The next examples shows that the set considered in \emph{Theorem \ref{t1.1}}\; for ``entire" and \emph{Theorem \ref{t1.2}}\; for ``meromorphic" functions respectively can not be replaced by arbitrary sets.
 \begin{exm}
 		Let us suppose that $\mathcal{S}_1=\{\zeta:\zeta^9-1=0\}$ and $\mathcal{S}_2=\{0,\infty\}$. Let $f(z)=\displaystyle\frac{ae^z}{b-d\sin^2\left(\displaystyle\frac{\pi z}{c}\right)}$. It is clear that $\ol E_{f(z)}(\mathcal{S}_j)=\ol E_{f(z+c)}(\mathcal{S}_j),$ $(j=1,\;2)$ in \emph{Theorem \ref{t1.1}}\; with $e^c=\zeta$ and $a,\;b,\;c,\;d\in\mathbb{C^{*}}$ 
 	and note that $f(z)\not\equiv f(z+c)$.
 \end{exm}
\begin{exm}
	Let us suppose that $\mathcal{S}_1=\{\zeta:\zeta^7-1=0\}$ and $\mathcal{S}_2=\{0,1\}$. Let $f(z)=\displaystyle\exp\left(\cos\left(\frac{\pi z}{c}\right)\right)$ or $\displaystyle\exp\left(\sin\left(\frac{\pi z}{c}\right)\right)$. Then $f(z+c)=\displaystyle\exp\left(-\cos\left(\frac{\pi z}{c}\right)\right)$ or $\displaystyle\exp\left(-\sin\left(\frac{\pi z}{c}\right)\right)$ respectively. It is clear that $\ol E_{f(z)}(\mathcal{S}_j)=\ol E_{f(z+c)}(\mathcal{S}_j),$ $(j=1,\;2)$ in \emph{Theorem \ref{t1.2}}\; 
	and note that $f(z)\not\equiv f(z+c)$.
\end{exm}
\begin{exm}
	Let $\mathcal{S}_1=\bigg\{-1,\;1,\;-i,\;0,\;i,\;-\displaystyle\frac{1}{\sqrt{2}},\frac{1}{\sqrt{2}}\bigg\}$ and $\mathcal{S}_2=\{-2,\;2\}$. Let $f(z)=e^z$. It is clear that $\ol E_{f(z)}(\mathcal{S}_j)=\ol E_{f(z+c)}(\mathcal{S}_j),$ $(j=1,\;2)$ in \emph{Theorem \ref{t1.2}}\; with $e^c=-1$, $c\in\mathbb{C^{*}}$ 
	and note that $f(z)\not\equiv f(z+c)$.
\end{exm} 


\section{Auxiliary sefinitions and Some Lemmas}
It was \emph{Fujimoto} \cite{6}, who first discovered a special property of a polynomial, reasonably called as critical injection property though initially \emph{Fujimoto} \cite{6} called it as property (H).
\begin{defi}
	Let $\mathcal{P}(w)$ be a non-constant monic polynomial. We call $\mathcal{P}(w)$ a uniqueness polynomial if $\mathcal{P}(f)\equiv c\mathcal{P}(g)$ implies $f\equiv g$ for any non-constant meromorphic functions $f$ and $g$ and any non-zero constant $c$. We also call $\mathcal{P}(w)$ a uniqueness polynomial in a broad sense if $\mathcal{P}(f)\equiv \mathcal{P}(g)$ implies $f\equiv g$.
\end{defi}\par Next we recall here the property (H) and critically injective polynomial.\par Let $\mathcal{P}(w)$ be a monic polynomial without multiple zero whose derivative has mutually distinct $k$-zeros $e_1, e_2,\ldots, e_k$ with the multiplicities $q_1, q_2, \ldots, q_k$ respectively.\par Now, the property $\mathcal{P}(e_l)\neq \mathcal{P}(e_m)$ for $1\leq l<m \leq k$ is a known as property (H) and a polynomial $\mathcal{P}(w)$ satisfying this property is called critically injective polynomial.\par 
Given meromorphic functions $f(z)$ and $f(z+c)$ we associate $\mathcal{F}$, $\mathcal{G}$ by 
\be\label{e2.1} \mathcal{F}=\mathcal{P}(f),\;\; \mathcal{G}=\mathcal{P}(f(z+c)), \ee  to $\mathcal{F}$, $\mathcal{G}$ we associate $\mathcal{H}$ and $\Phi$ by the following formulas
\bea\label{e2.2} \mathcal{H}&=&\frac{\left(\displaystyle\frac{1}{\mathcal{F}}\right)^{\prime\prime}}{\left(\displaystyle\frac{1}{\mathcal{F}}\right)^{\prime}}-\frac{\left(\displaystyle\frac{1}{\mathcal{G}}\right)^{\prime\prime}}{\left(\displaystyle\frac{1}{\mathcal{G}}\right)^{\prime}} =\left(\frac{\;\;\mathcal{F}^{\prime\prime}}{\mathcal{F}^{\prime}}-\frac{2\mathcal{F}^{\prime}}{\mathcal{F}}\right)-\left(\frac{\;\;\mathcal{G}^{\prime\prime}}{\mathcal{G}^{\prime}}-\frac{2\mathcal{G}^{\prime}}{\mathcal{G}}\right),\eea  \bea\label{e2.3}\Phi=\frac{\mathcal{F}^{\prime}}{\mathcal{F}}-\frac{\mathcal{G}^{\prime}}{\mathcal{G}}.\eea\par Before proceeding to the actual proofs, we recall a few lemmas that take an important role in the reasoning.
\begin{lem}\label{l2.1}\cite{Mok-1971} Let $ g $ be a non-constant meromorphic function and let \beas \mathcal{R^{\#}}(g)=\displaystyle\frac{\displaystyle\sum_{i=1}^{n}a_ig^i}{\displaystyle\sum_{j=1}^{m}b_jg^j}, \eeas be an irreducible rational function in $ g $ with constant coefficients $\{a_i\} $, $ \{b_j\}$, where $ a_n\neq 0 $ and $ b_m\neq 0 $. Then \beas T(r,\mathcal{R^{\#}}(g))=\max\{n,m\}\; T(r,g)+S(r,g). \eeas
\end{lem}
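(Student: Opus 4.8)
The plan is to establish the sharp two-sided estimate by computing the proximity and counting parts of the Nevanlinna characteristic separately and then reassembling them through the First Fundamental Theorem. Write the irreducible representation as $\mathcal{R}^{\#}(w)=P(w)/Q(w)$ with $P,Q$ coprime polynomials, $\deg P=n$, $\deg Q=m$, $a_n b_m\neq 0$, and put $d=\max\{n,m\}$. Since $T(r,\mathcal{R}^{\#}(g))=T(r,1/\mathcal{R}^{\#}(g))+O(1)$ and $1/\mathcal{R}^{\#}=Q/P$ carries the same maximal degree $d$, there is no loss in assuming $n\geq m$, so that $d=n$. Factoring $Q(w)=b_m\prod_{j}(w-\beta_j)^{\ell_j}$ with $\sum_j\ell_j=m$, coprimality guarantees that every $\beta_j$ is a genuine pole of $\mathcal{R}^{\#}$ of order exactly $\ell_j$; this is the structural fact driving the whole computation.

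First I would count the poles of $\mathcal{R}^{\#}(g)$. A pole of $g$ of order $\tau$ gives, via $\mathcal{R}^{\#}(g)\sim(a_n/b_m)g^{n-m}$, a pole of $\mathcal{R}^{\#}(g)$ of order $(n-m)\tau$; a $\beta_j$-point of $g$ of multiplicity $\tau$ gives, by coprimality, a pole of order $\ell_j\tau$. No other poles occur, so
\[
N\!\left(r,\mathcal{R}^{\#}(g)\right)=(n-m)\,N(r,g)+\sum_{j}\ell_j\,N\!\left(r,\frac{1}{g-\beta_j}\right)+O(1).
\]

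The decisive step is the matching identity for the proximity function,
\[
m\!\left(r,\mathcal{R}^{\#}(g)\right)=(n-m)\,m(r,g)+\sum_{j}\ell_j\,m\!\left(r,\frac{1}{g-\beta_j}\right)+O(1),
\]
which I would derive from the uniform pointwise comparison
\[
\log^{+}\bigl|\mathcal{R}^{\#}(w)\bigr|=(n-m)\log^{+}|w|+\sum_{j}\ell_j\log^{+}\frac{1}{|w-\beta_j|}+O(1)\qquad(w\in\mathbb{C})
\]
by substituting $w=g(re^{i\theta})$ and integrating over $|z|=r$. The comparison holds because $\log^{+}|\mathcal{R}^{\#}|$ has its only logarithmic singularities at the poles $\beta_j$ (each of weight $\ell_j$) and grows like $(n-m)\log|w|$ at infinity, the locus where $|g|$ is large and the loci where $g$ is near some $\beta_j$ being pairwise disjoint. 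I expect this proximity decomposition to be the main obstacle: it is precisely here that one must exclude spurious smallness of $P(g)$ near the $\beta_j$ (which is why irreducibility of $P/Q$ cannot be dispensed with) and show that the error stays bounded uniformly across the transition zones, including near the zeros of $P$ where $\mathcal{R}^{\#}(g)$ itself becomes small.

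It remains to add the two displays. Because $N+m$ reconstitutes the characteristic, the right-hand sides combine into
\[
T\!\left(r,\mathcal{R}^{\#}(g)\right)=(n-m)\,T(r,g)+\sum_{j}\ell_j\,T\!\left(r,\frac{1}{g-\beta_j}\right)+O(1).
\]
Invoking the First Fundamental Theorem in the form $T(r,1/(g-\beta_j))=T(r,g)+O(1)$ for each $j$ and using $\sum_j\ell_j=m$, the bracket collapses to $\bigl[(n-m)+m\bigr]T(r,g)+O(1)=n\,T(r,g)+O(1)=\max\{n,m\}\,T(r,g)+O(1)$. Since $g$ is non-constant, an $O(1)$ error is a fortiori of the form $S(r,g)$, so this conclusion is even sharper than the asserted one and completes the proof.
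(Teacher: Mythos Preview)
The paper does not supply its own proof of this lemma: it is stated with the citation \cite{Mok-1971} and invoked as a known result, so there is no argument in the paper to compare against.

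Your outline is the standard route to Mokhon'ko's theorem and is correct in substance. The reduction to $n\geq m$ via $T(r,h)=T(r,1/h)+O(1)$ is legitimate, the pole-counting identity is exact, and the pointwise comparison
\[
\log^{+}\!\bigl|\mathcal{R}^{\#}(w)\bigr|=(n-m)\log^{+}|w|+\sum_{j}\ell_j\log^{+}\frac{1}{|w-\beta_j|}+O(1)
\]
does hold uniformly on $\mathbb{C}$: the only places where either side can be unbounded are $w=\infty$ and $w=\beta_j$, and at each such place the two sides match up to a bounded error because coprimality forces the local order of $\mathcal{R}^{\#}$ at $\beta_j$ to be exactly $-\ell_j$, while near the zeros of $P$ both sides are bounded (the left is zero, the right is bounded since no $\beta_j$ coincides with a zero of $P$). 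Integrating and adding $N+m$ then recovers $T$, and the First Fundamental Theorem collapses the sum to $d\,T(r,g)+O(1)$ as you wrote. The one stylistic point is that your paragraph flagging the proximity step as ``the main obstacle'' reads as commentary rather than argument; in a finished proof you would replace it with the two-line boundedness check just sketched.
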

\begin{lem}\cite{6}\label{l2.2}
	Let $\mathcal{P}(w)$ be a polynomial satisfying the property \emph{(H)}. Then,  $\mathcal{P}(w)$ is a uniqueness polynomial in a broad sense if and only if \bea\label{e2.4}\sum_{1\leq l<m\leq k}q_{_l}q_{_m}>\sum_{l=1}^{k}q_{_l}.  \eea 
\end{lem}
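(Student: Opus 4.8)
To establish the equivalence I would translate the condition ``$\mathcal{P}$ is a uniqueness polynomial in the broad sense'' into a statement about the genus of an associated curve. Write $\deg\mathcal{P}=n+1$, so that $n=q_1+\cdots+q_k$ and $\mathcal{P}'(w)=(n+1)\prod_{l=1}^{k}(w-e_l)^{q_l}$, and view $\mathcal{P}$ as a degree $n+1$ branched covering $\ol{\mathbb C}\to\ol{\mathbb C}$. The central object is the curve
\[
\mathcal{C}\colon\quad R(z,w):=\frac{\mathcal{P}(z)-\mathcal{P}(w)}{z-w}=0,
\]
where $R$ has degree $n$ in each of $z,w$; geometrically $\{R=0\}$ inside $\ol{\mathbb C}\times\ol{\mathbb C}$ is the locus $\{\mathcal{P}(z)=\mathcal{P}(w)\}$ with the diagonal removed. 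Let $\widetilde{\mathcal{C}}$ denote the normalization of $\mathcal{C}$. Everything will follow once the genus of $\widetilde{\mathcal{C}}$ is computed.

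For the ``if'' direction, assume $\sum_{l<m}q_lq_m>\sum_l q_l$ and $\mathcal{P}(f)\equiv\mathcal{P}(g)$ with $f,g$ non-constant meromorphic, and suppose for contradiction that $f\not\equiv g$. Then $R(f,g)\equiv 0$, so $z\mapsto(f(z),g(z))$ is a non-constant holomorphic map of $\mathbb{C}$ into an irreducible component of $\mathcal{C}$; by the universal property of normalization it lifts to a non-constant holomorphic map $\mathbb{C}\to X$ into a compact Riemann surface $X$. Since a compact Riemann surface of genus $\ge2$ admits no non-constant holomorphic image of $\mathbb{C}$ (lift to its universal cover $\mathbb{D}$ and apply Liouville), $X$ has genus $\le1$; to get a contradiction I would show $\widetilde{\mathcal{C}}$ has genus $\ge2$. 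Apply Riemann--Hurwitz to the first projection $\pi\colon\widetilde{\mathcal{C}}\to\ol{\mathbb C}$, $(z,w)\mapsto z$, of degree $n$. By property \emph{(H)}, $\pi$ is unramified over each $e_m$ and over $\infty$, and its only branch points lie over the points $\gamma$ with $\gamma\ne e_m$ and $\mathcal{P}(\gamma)=\mathcal{P}(e_m)$ for some $m$; for each $m$ there are exactly $n-q_m$ such $\gamma$, all distinct because the critical values $\mathcal{P}(e_1),\dots,\mathcal{P}(e_k)$ are pairwise distinct, and over each of them the single ramified point $(\gamma,e_m)$ has ramification index $q_m+1$ (as $\mathcal{P}(w)-\mathcal{P}(e_m)$ vanishes to order $q_m+1$ at $e_m$). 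Hence
\[
2g(\widetilde{\mathcal{C}})-2=-2n+\sum_{m=1}^{k}(n-q_m)q_m=-2n+\Big(n^{2}-\sum_{m}q_m^{2}\Big)=-2n+2\sum_{l<m}q_lq_m,
\]
using $n^{2}=\big(\sum_m q_m\big)^{2}=\sum_m q_m^{2}+2\sum_{l<m}q_lq_m$. Thus $g(\widetilde{\mathcal{C}})=1-n+\sum_{l<m}q_lq_m$, which is $\ge2$ precisely when $\sum_{l<m}q_lq_m>\sum_l q_l$; this contradicts $g(X)\le1$, so $f\equiv g$.

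The ``only if'' direction I would prove in contrapositive form: if $\sum_{l<m}q_lq_m\le\sum_l q_l$, the same formula gives a component of $\widetilde{\mathcal{C}}$ of genus $0$ or $1$, hence there is a non-constant holomorphic map $\phi\colon\mathbb{C}\to X$ of it (a rational parametrization if $X\cong\ol{\mathbb C}$, the universal covering $\mathbb{C}\to\mathbb{C}/\Lambda$ if $X$ is elliptic). Put $f=\pi_1\circ\phi$, $g=\pi_2\circ\phi$ for the two coordinate projections: $f,g$ are meromorphic, non-constant (the projections have finite fibres while $\phi$ has infinite image), satisfy $f\not\equiv g$ (as $\mathcal{C}$ meets the diagonal in only finitely many points), and $\mathcal{P}(f)\equiv\mathcal{P}(g)$ (as $\mathcal{P}(z)\equiv\mathcal{P}(w)$ on $\mathcal{C}$). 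So $\mathcal{P}$ is not a uniqueness polynomial in the broad sense.

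The step I expect to be the main obstacle is the local ramification analysis feeding Riemann--Hurwitz, which is exactly where property \emph{(H)} does its work: one needs both that $\mathcal{P}(w)-\mathcal{P}(e_m)$ has no multiple root besides $e_m$ (so that above each $\gamma$ precisely one point of $\widetilde{\mathcal{C}}$ is ramified, with index $q_m+1$) and that the critical values $\mathcal{P}(e_m)$ are pairwise distinct (so the $\gamma$'s arising from different $m$ never collide, and the ramification contributions add). A second delicate point is the possible reducibility of $\mathcal{C}$: in the ``if'' direction one must know that the component carrying $(f,g)$ is itself of genus $\ge2$, and in the ``only if'' direction that a genus $\le1$ component exists whose two coordinate functions are both non-constant; both ultimately rely on property \emph{(H)}, which precludes the structural symmetries of $\mathcal{P}$ that would make $\mathcal{P}(z)-\mathcal{P}(w)$ factor beyond the obvious $z-w$.
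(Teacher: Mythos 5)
This lemma is not proved in the paper at all: it is quoted verbatim from Fujimoto \cite{6}, so there is no internal argument to compare yours with, and your proposal has to stand on its own. Its ``only if'' half does stand: the ramification analysis of the projection of the curve $\{\mathcal{P}(z)=\mathcal{P}(w)\}$ minus the diagonal is correct (unramified fibres over each $e_m$ and over $\infty$, one point of index $q_m+1$ over each of the $n-q_m$ simple roots $\gamma$ of $\mathcal{P}(w)=\mathcal{P}(e_m)$, these $\gamma$ pairwise distinct by (H)), and the resulting Euler characteristic count $\sum_i(2g_i-2)=2\sum_{l<m}q_lq_m-2\sum_l q_l$ does yield, when the right-hand side is $\le 0$, a component of genus at most $1$, whose rational or elliptic parametrization produces a non-trivial pair with $\mathcal{P}(f)\equiv\mathcal{P}(g)$.

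The ``if'' direction, however, has a genuine gap, and it is exactly at the point you flag and then dismiss. Riemann--Hurwitz applied to the possibly disconnected normalization only controls the \emph{sum} $\sum_i(2g_i-2)$ over the irreducible components; positivity of this sum does not prevent one component from having genus $0$ or $1$, and it is precisely such a component that would carry the image of $(f,g)$. So your argument proves the implication only when $(\mathcal{P}(z)-\mathcal{P}(w))/(z-w)$ is irreducible. Your closing claim that property (H) ``precludes'' any factorization beyond $z-w$ is unproven and is false as a general statement: $\mathcal{P}(w)=w^{n+1}$ is critically injective (the condition (H) is vacuous for $k=1$), yet $(z^{n+1}-w^{n+1})/(z-w)$ splits into $n$ linear factors. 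In that example the hypothesis $\sum_{l<m}q_lq_m>\sum_l q_l$ fails, but it shows that (H) by itself does not give irreducibility, so you would have to prove that under (H) together with the inequality no component of genus $\le 1$ can occur even when the curve is reducible --- and that is essentially the whole content of Fujimoto's theorem, which he establishes by a substantially more involved argument than a single global Riemann--Hurwitz count. As it stands, the reduction of the lemma to ``the total genus is large'' is circular at its crucial step.
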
\par It can be easily verified that for the case $k\geq 4$, the condition (\ref{e2.4}) is always satisfied.  Moreover,  (\ref{e2.4}) holds when $\max\{q_1, q_2, q_3\}\geq 2$ for the case $k=3$ and when $\min\{q_1, q_2\}\geq 2$ and $q_1+q_2\geq 5$ for the case $k=2$. 


\section{Proofs of the theorems}
In this section, we give the proofs of our main results.
\begin{proof}[Proof of Theorem \ref{t1.1}] let $f(z)$ and $f(z+c)$ be any two non-constant meromorphic functions.  It is clear that \beas \mathcal{F}^{\prime}=(f(z)-a)^n(f(z)-b)^4f^{\prime}(z)\;\;\text{and}\;\;\mathcal{G}^{\prime}=(f(z+c)-a)^n(f(z+c)-b)^4f^{\prime}(z+c).\eeas We now discuss the following two cases:\par 
\indent{\bf{\sc Case 1.}} There exists a $\lambda>1$, $I\subset\mathbb{R}^{+}$ with measure of $I$ as $+\infty$ such that \bea\label{e3.1} && 2\ol N\left(r,\frac{1}{f(z)-a}\right)+2\ol N\left(r,\frac{1}{f(z)-b}\right)\\ &\geq&\nonumber \lambda\bigg\{T(r,f(z))+T(r,f(z+c))\bigg\}+S(r,f(z))+S(r,f(z+c)), \;\;(r\rightarrow +\infty, r\in I). \eea \par Let $\Phi$ is defined as in (\ref{e2.3}). Our aim is to show that $\Phi=0$. Let if possible $\Phi\not\equiv 0$. Then since  $n\geq 4$, so from the construction of $\Phi$, we get \bea\label{e3.2} 4\ol N\left(r,\frac{1}{f(z)-a}\right)+4\ol N\left(r,\frac{1}{f(z)-b}\right)\leq N\left(r,\frac{1}{\Phi}\right).\eea\par The possible poles of $\phi$ occur at the following points:
	(i) poles of $f(z)$, (ii) poles of $f(z+c)$, (iii) all the zeros of $\mathcal{F}$ of multiplicities $\geq 2$ and
 (iv) all the zeros of $\mathcal{G}$ of multiplicities $\geq 2$.
\par So we have \bea\label{e3.3}  N(r,\Phi)\leq \ol N(r.f(z))+\ol N_{(2}\left(r,\frac{1}{\mathcal{F}}\right)+\ol N(r.f(z+c))+\ol N_{(2}\left(r,\frac{1}{\mathcal{G}}\right). \eea\par By using \emph{First Fundamental Theorem} and (\ref{e3.2}), (\ref{e3.3}), we get \bea\label{e3.4} &&4\ol N\left(r,\frac{1}{f(z)-a}\right)+4\ol N\left(r,\frac{1}{f(z)-b}\right)\\ &\leq&\nonumber N\left(r,\frac{1}{\Phi}\right)\\ &\leq&\nonumber N(r,\Phi)\\ &\leq& \nonumber \ol N(r.f(z))+\ol N_{(2}\left(r,\frac{1}{\mathcal{F}}\right)+\ol N(r.f(z+c))+\ol N_{(2}\left(r,\frac{1}{\mathcal{G}}\right)\\\nonumber &&+S(r,f(z))+S(r,f(z+c)). \eea Again since $\ol E_{f(z)}(\mathcal{S}_2)=\ol E_{f(z+c)}(\mathcal{S}_2)$, so we must have \bea\label{e3.5} &&\ol N\left(r,\frac{1}{f(z)-a}\right)+\ol N\left(r,\frac{1}{f(z)-b}\right)\\ &=&\nonumber\ol N\left(r,\frac{1}{f(z+c)-a}\right)+\ol N\left(r,\frac{1}{f(z+c)-b}\right). \eea\par Now adding $\ol N\left(r,\displaystyle\frac{1}{\mathcal{F}}\right)+\ol N\left(r,\displaystyle\frac{1}{\mathcal{G}}\right)$ on both sides of (\ref{e3.4}), we get \bea\label{e3.6} && 4\ol N\left(r,\frac{1}{f(z)-a}\right)+4\ol N\left(r,\frac{1}{f(z)-b}\right)+ \ol N\left(r,\displaystyle\frac{1}{\mathcal{F}}\right)+\ol N\left(r,\displaystyle\frac{1}{\mathcal{G}}\right)\\ &\leq&\nonumber\ol N(r.f(z))+ N\left(r,\frac{1}{\mathcal{F}}\right)+\ol N(r.f(z+c))+ N\left(r,\frac{1}{\mathcal{G}}\right)\\\nonumber &&+S(r,f(z))+S(r,f(z+c)).\eea \par Next using (\ref{e3.5}) in (\ref{e3.6}), we get \bea\label{e3.7} && \bigg\{\ol N\left(r,\frac{1}{f(z)-a}\right)+\ol N\left(r,\frac{1}{f(z)-b}\right)\bigg\}+ 2\bigg\{\ol N\left(r,\frac{1}{f(z)-a}\right)+\ol N\left(r,\frac{1}{f(z)-b}\right)\bigg\}\\ &&\nonumber+\bigg\{\ol N\left(r,\frac{1}{f(z+c)-a}\right)+\ol N\left(r,\frac{1}{f(z+c)-b}\right)\bigg\}+ \ol N\left(r,\displaystyle\frac{1}{\mathcal{F}}\right)+\ol N\left(r,\displaystyle\frac{1}{\mathcal{G}}\right)\\ &\leq&\nonumber \ol N(r.f(z))+ N\left(r,\frac{1}{\mathcal{F}}\right)+\ol N(r.f(z+c))+ N\left(r,\frac{1}{\mathcal{G}}\right)+S(r,f(z))\\ &&+\nonumber S(r,f(z+c)). \eea \par By applying \emph{Second Fundamental Theorem}, we get \bea\label{e3.8} && (n+5)\bigg\{T(r,f(z))+T(r,f(z+c))\bigg\}\\ &\leq& \nonumber\ol N\left(r,\frac{1}{\mathcal{F}}\right)+\ol N\left(r,\frac{1}{f(z)-a}\right)+\ol N\left(r,\frac{1}{f(z)-b}\right)+ \ol N\left(r,\frac{1}{\mathcal{G}}\right)\\ &&+\ol N\left(r,\frac{1}{f(z+c)-a}\right)+\nonumber\ol N\left(r,\frac{1}{f(z+c)-b}\right)+S(r,f(z))+S(r,f(z+c)). \eea\par Adding  \beas 2 \ol N\left(r,\frac{1}{f(z)-a}\right)+2\ol N\left(r,\frac{1}{f(z)-b}\right) \eeas both sides in (\ref{e3.8}) and using (\ref{e3.7}), we get \beas && (n+5)\bigg\{T(r,f(z))+T(r,f(z+c))\bigg\}+ 2 \ol N\left(r,\frac{1}{f(z)-a}\right)+2\ol N\left(r,\frac{1}{f(z)-b}\right)\\ &\leq&  N\left(r,\frac{1}{\mathcal{F}}\right)+ N\left(r,\frac{1}{\mathcal{G}}\right)+\ol N(r,f(z))+\ol N(r,f(z+c))+S(r,f(z))+S(r,f(z+c))\\ &\leq& (n+6)\bigg\{T(r,f(z))+T(r,f(z+c))\bigg\}. \eeas i.e., \beas  2 \ol N\left(r,\frac{1}{f(z)-a}\right)+2\ol N\left(r,\frac{1}{f(z)-b}\right)\leq \bigg\{T(r,f(z))+T(r,f(z+c))\bigg\}, \eeas which is not possible for $\lambda>1$ in view of (\ref{e3.1}).\par Thus, we get $\Phi\equiv 0$. i.e., $\mathcal{F}\equiv \mathcal{AG}$, for $\mathcal{A}\in\mathbb{C}\setminus\{0\}$. Using \emph{Lemma \ref{l2.1}}, we have \bea\label{e3.9} T(r,f(z))=T(r,f(z+c))+S(r,f(z)). \eea
\indent{\bf{\sc Subcase 1.1.}} Let $\mathcal{A}\neq 1$.


\par So from the relation $\mathcal{F}\equiv \mathcal{AG}$, we get \bea\label{e3.10} \mathcal{F}-\mathcal{A}\equiv\mathcal{A}(\mathcal{G}-1).  \eea\par A simple calculation shows that the polynomial $\mathcal{P}(z)-\mathcal{A}$ has all simple distinct roots and let them be $\sigma_{j}$ $(j=1, 2, \ldots, n+5)$ and all $\sigma_{j}\neq a,\;b$. Also we note that the polynomial $\mathcal{P}(z)-1$ has roots as $a$ of multiplicity $n+1$ and rest are $\delta_{j}$ $(j=1, 2, 3, 4)$.\par Thus we see from (\ref{e3.10}) that \bea &&\label{e3.11} \sum_{j=1}^{n+5}\ol N\left(r,\frac{1}{f(z)-\sigma_{j}}\right)\\ &=&\nonumber\ol N\left(r,\frac{1}{f(z+c)-a}\right)+\sum_{j=1}^{4}\ol N\left(r,\frac{1}{f(z+c)-\delta_{j}}\right). \eea\par Now by using \emph{Second Fundamental Theorem} and (\ref{e3.9}), we have \beas  (n+3)T(r,f(z)) &\leq& \sum_{j=1}^{n+5}\ol N\left(r,\frac{1}{f(z)-\sigma_{j}}\right)+S(r,f(z))\\ &\leq& \ol N\left(r,\frac{1}{f(z+c)-a}\right)+\sum_{j=1}^{4}\ol N\left(r,\frac{1}{f(z+c)-\delta_{j}}\right)+S(r,f(z))\\&\leq& 5 T(r,f(z))+S(r,f(z)), \eeas which contradicts $n\geq 4$.\\

\indent{\bfseries{\sc Subcase 1.2.}} Let $\mathcal{A}=1$. i.e., we have $\mathcal{F}\equiv \mathcal{G}$. Thus we get $\mathcal{P}(f)\equiv \mathcal{P}(f(z+c))$. We see that the polynomial $\mathcal{P}(z)=\displaystyle\int_{0}^{z-a}(t-a)^n(t-b)^4dt+1$ satisfies the condition (H) and (\ref{e2.4}) since $\mathcal{P}^{\prime}(z)=(z-a)^n(z-b)^4$, $k=2$, $e_1=a$, $e_2=b$ and $q_1=n\geq 4$, $q_2=4$. We next see that $\min\{q_1, q_2\}=\min\{n, 4\}\geq 2$ and $q_1+q_2=n+4\geq 5$. Therefore by \emph{Lemma \ref{l2.2}}, we see that the polynomial $\mathcal{P}(z)$ is a uniqueness polynomial in a broad sense. Hence the relation $\mathcal{P}(f)\equiv \mathcal{P}(f(z+c))$ implies $f(z)\equiv f(z+c)$.\\
\indent{\bf{\sc Case 2.}}	There exists $I\subset\mathbb{R}^{+}$ such that measure of $I$ is $+\infty$ such that \bea\label{e3.14} &&2\ol N\left(r,\frac{1}{f(z)-a}\right)+2\ol N\left(r,\frac{1}{f(z)-b}\right)\\&\leq&\left(1+\frac{1}{1000}\right)\bigg\{T(r,(z)f)+T(r,f(z+c))\bigg\}+S(r,(z)f)+S(r,f(z+c).\nonumber  \eea\par
We claim that $\mathcal{H}\equiv 0$. Suppose that $\mathcal{H}\not\equiv 0$. Next in view of the definition $\mathcal{H}$, we see that \bea\label{e3.15} \ol N^{E}_{1)}\left(r,\frac{1}{\mathcal{F}}\right)=\ol N^{E}_{1)}\left(r,\frac{1}{\mathcal{G}}\right)\leq N\left(r,\frac{1}{H}\right).  \eea\par Next we see that the possible poles of $\mathcal{H}$ occur at the following points: (i) poles of $f(z)$, (ii) poles of $f(z+c)$, (iii) zeros of $f(z)$, (iv) $1$-points of $f(z)$, (v) all those zeros of $f^{\prime}(z)$ which are not the zeros of $f(z)(f(z)-1)$ and (vi)	all those zeros of $f^{\prime}(z+c)$ which are not the zeros of $f(z+c)(f(z+c)-1)$. Thus we get \bea\label{e3.16} && N(r,\mathcal{H})\leq \ol N(r,f(z))+\ol N\left(r,\frac{1}{f(z)-a}\right)+\ol N\left(r,\frac{1}{f(z)-b}\right)+\ol N(r,f(z+c))\\ &&+\ol N_{0}(r,0;f^{\prime}(z))+\ol N_{0}(r,0;f^{\prime}(z+c)),\nonumber\eea where $\ol N_0\left(r,\displaystyle\frac{1}{f^{\prime}(z)}\right)$ is the reduced counting function of all those zeros of $f^{\prime}(z)$ which are not the zeros of $(f(z)-a)(f(z)-b)$. Similarly  $\ol N_0\left(r,\displaystyle\frac{1}{f^{\prime}(z+c)}\right)$ is defined. \par Therefore using \emph{First Fundamental Theorem}, we get \bea\label{e3.17} \ol N^{E}_{1)}\left(r,\frac{1}{\mathcal{F}}\right)\nonumber &\leq& N\left(r,\frac{1}{\mathcal{H}}\right)\\ &\leq& N(r,\mathcal{H}) \\&\leq&\nonumber \ol N(r,f(z))+\ol N\left(r,\frac{1}{f(z)-a}\right)+\ol N\left(r,\frac{1}{f(z)-b}\right)+\ol N(r,f(z+c))\\ &&+\ol N_{0}(r,0;f^{\prime}(z))+\ol N_{0}(r,0;f^{\prime}(z+c)).\nonumber \eea\par We also note that \beas \ol N_{(2}\left(r,\frac{1}{\mathcal{F}}\right)\leq \ol N_{0}\left(r,\frac{1}{f^{\prime}(z)}\right),\;\; \ol N_{(2}\left(r,\frac{1}{\mathcal{G}}\right)\leq \ol N_{0}\left(r,\frac{1}{f^{\prime}(z+c)}\right).\eeas\par Next we define $\Psi(z)=\displaystyle\frac{f^{\prime}(z)}{[f(z)-a][(f(z)-b]}\frac{f^{\prime}(z+c)}{[f(z+c)-a][f(z+c)-b}$.\par From the definition of $\Psi$ and by using \emph{First Fundamental Theorem} and (\ref{e3.5}), we get \bea\label{e3.18} && N_{0}\left(r,\frac{1}{f^{\prime}(z)}\right)+N_{0}\left(r,\frac{1}{f^{\prime}(z+c)}\right)\\&\leq& \nonumber\ol N\left(r,\frac{1}{\Psi}\right)\\ \nonumber&\leq& \nonumber\ol N(r,\Psi)\nonumber\\ &\leq& \nonumber\ol N\left(r,\frac{1}{f(z)-a}\right)+\ol N\left(r,\frac{1}{f(z)-b}\right)+\ol N\left(r,\frac{1}{f(z+c)-a}\right)\\ \nonumber&&+\ol N\left(r,\frac{1}{f(z+c)-b}\right)+S(r,f(z))+S(r,f(z+c))\nonumber\\ &\leq& 2\ol N\left(r,\frac{1}{f(z)-a}\right)+2\ol N\left(r,\frac{1}{f(z)-b}\right)+S(r,f(z))+S(r,f(z+c)).\nonumber\eea\par Adding \beas \ol N_{(2}\left(r,\frac{1}{\mathcal{F}}\right)+\ol N_{(2}\left(r,\frac{1}{\mathcal{G}}\right)+\ol N\left(r,\frac{1}{f(z)-a}\right)+\ol N\left(r,\frac{1}{f(z)-b}\right)   \eeas both sides of (\ref{e3.17}), we get \bea\label{e3.19} &&\ol N^{E}_{1)}\left(r,\frac{1}{\mathcal{F}}\right)+ \ol N_{(2}\left(r,\frac{1}{\mathcal{F}}\right)+\ol N_{(2}\left(r,\frac{1}{\mathcal{G}}\right)+\ol N\left(r,\frac{1}{f(z)-a}\right)+\ol N\left(r,\frac{1}{f(z)-b}\right) \\&\leq& \nonumber\ol N(r,f(z))+2\ol N\left(r,\frac{1}{f(z)-a}\right)+2\ol N\left(r,\frac{1}{f(z)-b}\right)+\ol N(r,f(z+c))\\ &&+2\ol N_{0}\left(r,\frac{1}{f^{\prime}(z)}\right)+2\ol N_{0}\left(r,\frac{1}{f^{\prime}(z+c)}\right). \nonumber\eea i.e., \bea\label{e3.20} &&\ol N\left(r,\frac{1}{\mathcal{F}}\right)+ \ol N\left(r,\frac{1}{f(z)-a}\right)+\ol N\left(r,\frac{1}{f(z)-b}\right)\\ &\leq& \ol N(r,f(z))+6\ol N\left(r,\frac{1}{f(z)-a}\right)+6\ol N\left(r,\frac{1}{f(z)-b}\right)+\ol N(r,f(z+c))\nonumber\\ &&+S(r,f(z))+S(r,f(z+c)).\nonumber \eea Similarly, we get \bea\label{e3.21} &&\ol N\left(r,\frac{1}{\mathcal{G}}\right)+ \ol N\left(r,\frac{1}{f(z+c)-a}\right)+\ol N\left(r,\frac{1}{f(z+c)-b}\right)\\ &\leq& \ol N(r,f(z+c))+6\ol N\left(r,\frac{1}{f(z+c)-a}\right)+6\ol N\left(r,\frac{1}{f(z+c)-b}\right)+\ol N(r,f(z))\nonumber\\ &&+S(r,f(z))+S(r,f(z+c)).\nonumber \eea\par Now by applying \emph{Second Fundamental Theorem} and (\ref{e3.14}), (\ref{e3.20}) and (\ref{e3.21}), we get \beas && (n+5)\bigg\{T(r,f(z))+T(r,f(z+c))\bigg\}\\ &\leq& \ol N\left(r,\frac{1}{\mathcal{F}}\right)+\ol N(r,f(z))+\ol N\left(r,\frac{1}{f(z)-a}\right)+\ol N\left(r,\frac{1}{\mathcal{G}}\right)+\ol N(r,f(z+c))\\ &&+\ol N\left(r,\frac{1}{f(z+c)-a}\right)+S(r,f(z))+S(r,f(z+c))\\ &\leq& 2\ol N(r,f(z))+2\ol N(r,f(z+c))+6\ol N\left(r,\frac{1}{f(z)-a}\right)+6\ol N\left(r,\frac{1}{f(z+c)-a}\right)\\ &&+6\ol N\left(r,\frac{1}{f(z)-b}\right)+6\ol N\left(r,\frac{1}{f(z+c)-b}\right)+S(r,f(z))+S(r,f(z+c))\\ &\leq& \left(8+\frac{6}{1000}\right)\bigg\{T(r,f(z))+T(r,f(z+c))\bigg\}++S(r,f(z))+S(r,f(z+c)), \eeas which contradicts $n\geq 4$.\par Therefore, we have $\mathcal{H}\equiv 0$. Thus we get \bea\label{e3.22} \frac{1}{\mathcal{F}}\equiv\frac{\mathcal{A}}{\mathcal{G}}+\mathcal{B}, \eea where $\mathcal{A}(\neq 0), \mathcal{B}\in\mathbb{C}$. In view of \emph{Lemma \ref{l2.1}}, we see from (\ref{e3.22}) that \bea\label{e3.23} T(r,f(z))=T(r,f(z+c))+S(r,f(z)).  \eea
\indent{\bfseries{\sc Subcase 2.1.}} Let $\mathcal{B}\neq 0$. Thus we must have \beas \ol N(r,f(z))=\ol N(r,\mathcal{F})=\ol N\left(r,\frac{1}{\mathcal{G}+\displaystyle\frac{\mathcal{A}}{\mathcal{B}}}\right) &\geq& 3T(r,f(z+c))+S(r,f(z+c)),\eeas which is absurd in view of (\ref{e3.23}).\\
\indent{\bfseries{\sc Subcase 2.2.}} So we have $\mathcal{B}=0$. Therefore (\ref{e3.22}) reduces to  $\mathcal{G}=\mathcal{AF}$. Next proceeding exactly same way as done in \emph{\sc Subcase 1.1}, we get $f(z)\equiv f(z+c)$.
\end{proof}
\begin{proof}[Proof of Theorem \ref{t1.2}] Since $f(z)$ is a non-constant entire function, so we must have $\ol N(r,f(z))=0$ and hence $\ol N(r,f(z+c))=0$. Now keeping this in mind, the rest of the proof follows the proof of \emph{Theorem \ref{t1.1}}.

\end{proof}

\section{An Open question}

\begin{ques}
	What is the best possible cardinality of two set sharing problem for the uniqueness of a meromorphic function and its shift operator?
\end{ques}

\end{document}